\documentclass[11pt]{article}

\usepackage[T1]{fontenc}
\usepackage[latin9]{inputenc}
\usepackage{geometry}
\geometry{verbose,tmargin=2.5cm,bmargin=2.5cm,lmargin=2.5cm,rmargin=2.5cm}
\usepackage{amsmath}
\usepackage{amsthm}
\usepackage{amssymb}
\usepackage{thmtools}
\usepackage{thm-restate}
\usepackage{bm}
\usepackage{color}
\usepackage{paralist}
\usepackage{csquotes}
\usepackage[unicode=true,pdfusetitle,
 bookmarks=true,bookmarksnumbered=false,bookmarksopen=false,
 breaklinks=false,pdfborder={0 0 0},pdfborderstyle={},backref=false,colorlinks=false]
 {hyperref}
 
\usepackage[square,sort,comma,numbers]{natbib} 

\usepackage{setspace}
\setstretch{1.02}


\makeatletter

\usepackage[nameinlink,capitalise,noabbrev]{cleveref}

\hypersetup{%
    bookmarksnumbered, bookmarksopen=true, bookmarksopenlevel=1,%
}

\theoremstyle{plain}
\newtheorem{thm}{Theorem}[section]
\crefname{thm}{Theorem}{Theorems}
\theoremstyle{plain}
\newtheorem{lem}[thm]{Lemma}
\newtheorem{obs}[thm]{Observation}
\crefname{lem}{Lemma}{Lemmas}
\theoremstyle{plain}

\theoremstyle{plain}
\newtheorem*{claim*}{Claim}
\crefname{claim}{Claim}{Claims}
\theoremstyle{definition}

\theoremstyle{plain}

\newtheorem{ques}[thm]{Question}

\theoremstyle{definition}

\crefformat{equation}{#2(#1)#3}
\crefname{appsec}{Appendix}{Appendices}

\DeclareMathOperator{\ssat}{ssat}
\DeclareMathOperator{\sat}{sat}
\DeclareMathOperator{\Prob}{\mathbb{P}}

\usepackage{appendix}

\crefformat{equation}{#2(#1)#3}

\let\originalleft\left
\let\originalright\right
\renewcommand{\left}{\mathopen{}\mathclose\bgroup\originalleft}
\renewcommand{\right}{\aftergroup\egroup\originalright}
\usepackage{verbatim}

\makeatletter
\renewcommand*{\UrlTildeSpecial}{%
  \do\~{%
    \mbox{%
      \fontfamily{ptm}\selectfont
      \textasciitilde
    }%
  }%
}%
\let\Url@force@Tilde\UrlTildeSpecial
\makeatother

\makeatother
\setlength\parindent{0pt}
\begin{document}

\title{Two problems in graph Ramsey theory}

\author{Tuan Tran\thanks{Discrete Mathematics Group, Institute for Basic Science (IBS), Daejeon, Republic of Korea. Email:
\href{mailto:tuantran@ibs.re.kr} {\nolinkurl{tuantran@ibs.re.kr}}.
This work was supported by the Institute for Basic Science (IBS-R029-Y1).}}

\date{}

\maketitle
\global\long\def\calC{\mathcal{C}}
\global\long\def\F{\mathbb{F}}
\global\long\def\RR{\mathbb{R}}
\global\long\def\cE{\mathcal{E}}
\global\long\def\E{\mathbb{E}}
\global\long\def\N{\mathbb{N}}
\global\long\def\I{\mathcal{I}}
\global\long\def\one{\boldsymbol{1}}
\global\long\def\range#1{\left[#1\right]}
\global\long\def\cP{\mathcal{P}}
\global\long\def\cL{\mathcal{L}}
\global\long\def\calG{\mathcal{G}}
\global\long\def\Pr{\mathrm{P}}

\begin{abstract}
	
We study two problems in graph Ramsey theory. 
In the early 1970's, Erd\H{o}s and O'Neil considered a generalization of Ramsey numbers. Given integers $n,k,s$ and $t$ with $n \ge k \ge s,t \ge 2$, they asked for the least integer $N=f_k(n,s,t)$ such that in any red-blue coloring of the $k$-subsets of $\{1, 2,\ldots, N\}$, there is a set of size $n$ such that either each of its $s$-subsets is contained in some red $k$-subset, or each of its $t$-subsets is contained in some blue $k$-subset. Erd\H{o}s and O'Neil found an exact formula for $f_k(n,s,t)$ when $k\ge s+t-1$. In the arguably more interesting case where $k=s+t-2$, they showed $2^{-\binom{k}{2}}n<\log f_k(n,s,t)<2n$ for sufficiently large $n$. Our main result closes the gap between these lower and upper bounds, determining the logarithm of $f_{s+t-2}(n,s,t)$ up to a multiplicative factor.

Recently,  Dam\'asdi, Keszegh, Malec, Tompkins, Wang and Zamora initiated the investigation of saturation problems in Ramsey theory, wherein one seeks to minimize $n$ such that there exists an $r$-edge-coloring of $K_n$ for which any extension of this to an $r$-edge-coloring of $K_{n+1}$ would create a new monochromatic copy of $K_k$. We obtain essentially sharp bounds for this problem.
\end{abstract}

\section{Introduction}

The Ramsey number $R(n)$ is the smallest natural number $N$ such that every two-coloring of the edges of $K_N$ contains a monochromatic clique of size $n$. The existence of these numbers is guaranteed by Ramsey's theorem \cite{Ramsey30}. Classic results of Erd\H{o}s and Szekeres \cite{ES35} and Erd\H{o}s \cite{Erdos47} imply ${2^{n/2}<R(n)<4^n}$ for every $n\ge 3$. While there have been several improvements on these bounds (see \cite{Colon09,Sah20,Spencer75}), the constant factors in the exponents have remained unchanged for over seventy years.
Given these difficulties, it is natural that the field has stretched in different directions.  One such direction is to try to generalize Ramsey's theorem.

\subsection{A generalization of Ramsey numbers}
\label{sec:shadow Ramsey}
In the early 1970's, Erd\H{o}s and O'Neil \cite{EO73} considered the following generalization of Ramsey numbers. Given integers $n,k,s$ and $t$ with $n \ge k \ge s,t \ge 2$, define $N=f_k(n,s,t)$ to be the minimum integer with the property that, in any red-blue coloring of the $k$-subsets of an $N$-element set, there exists a subset of size $n$ for which either each of its $s$-subsets is contained in some red $k$-subset, or each of its $t$-subsets is contained in some blue $k$-subset. For example, $f_2(n,2,2)$ is the Ramsey number $R(n)$. It is worth noting that a variant of $f_k(n,2,2)$ has been studied recently in \cite{STWZ19,GMOV20}.

Erd\H{o}s and O'Neil \cite{EO73} showed $f_k(n,s,t)=2n-s-t+1$ when $k=s+t-1$. The next interesting case occurs for $k=s+t-2$, where they proved $2^{2^{-\binom{k}{2}}n}<f_k(n,s,t)<4^n$ assuming $n$ is sufficiently large. They claimed that the upper bound can be improved further to $f_k(n,s,t)<2^{o_k(1)n}$. This result is interesting as it indicates the separation between $f_{s+t-2}(n,s,t)$ and $R(n)$. 

The aforementioned lower and upper bounds for $f_{s+t-2}(n,s,t)$ are far apart if we allow $s$ and $t$ to grow with $n$. In this paper, we close this gap, determining the logarithm of $f_{s+t-2}(n,s,t)$ asymptotically.

\begin{thm}\label{thm:ramsey}
For all sufficiently large $n$ and all $s,t$ with $2 \le s \le t \le \log n/(120\log \log n)$, we have
	\[
	2^{\frac{n}{32t}\log\big(\frac{2et}{s}\big)}\le f_{s+t-2}(n,s,t) \le 2^{\frac{4n}{t}\log\big(\frac{2et}{s}\big)}.
	\]
\end{thm}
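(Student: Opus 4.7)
The plan is to handle the upper and lower bounds separately, using a structural iteration for the former and a random construction for the latter. Set $k = s+t-2$ throughout.

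\textbf{Upper bound.} Fix any red/blue coloring of $\binom{[N]}{k}$ and suppose, toward a contradiction, that neither an R-bad nor a B-bad $n$-set exists. Call an $s$-set \emph{R-free} if every one of its $k$-supersets is blue, and let $\mathcal{F}_R$ be the family of R-free $s$-sets; define $\mathcal{F}_B$ symmetrically for blue $t$-sets. The hypothesis translates to $\alpha(\mathcal{F}_R) \le n-1$ and $\alpha(\mathcal{F}_B) \le n-1$. A crucial observation is that any $S \in \mathcal{F}_R$ and $T \in \mathcal{F}_B$ must satisfy $|S \cap T| \le 1$: otherwise $|S \cup T| \le s+t-2 = k$, and a $k$-set containing $S \cup T$ would have to be simultaneously red and blue. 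Starting from these three properties, I plan an iterative reduction: at each step either find a B-bad $n$-set directly, or select a vertex $v \in [N]$ whose link in $\mathcal{F}_R$ (or $\mathcal{F}_B$) is densest, and pass to a sub-ground-set $V' \subseteq [N] \setminus \{v\}$ of size $|V'| \ge N/c$ where $c = 2^{\Theta((1/t)\log(2et/s))}$, on which the parameters $(n,s)$ or $(n,t)$ each decrement by one and the three structural properties are inherited. After $\Theta(n)$ iterations, the remaining family is trivial on a nonempty ground set, forcing the contradiction $\log N \le (4n/t)\log(2et/s)$.

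\textbf{Lower bound.} Here I produce a good coloring by randomness. Independently coloring each $k$-set red with probability $p$ is too crude: for $s \le t-2$ any two $s$-sets share a common $k$-superset, so the coverage events are heavily correlated. The plan is a \enquote{two-layer} construction: first sample a random auxiliary graph $H$ on $[N]$ with edge probability $q = s/(et)$; then color a $k$-set $K$ red iff $H[K]$ contains a prescribed substructure (for instance a clique of size $t$, or a fixed perfect matching), and blue otherwise. The R-coverage of an $s$-set then reduces to a subgraph-existence event in $H$, whose probability I tune to match the symmetric B-coverage probability of a $t$-set. For a fixed $n$-set $A$, the correlations between R-coverage events of different $s$-subsets factor through $H|_A$, enabling sharp Janson-type bounds. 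A union bound over all $\binom{N}{n}$ choices of $A$, combined with the choice $N = 2^{(n/32t)\log(2et/s)}$, yields positive probability of a good coloring.

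The \emph{main obstacles} are complementary. In the upper bound, the difficult quantitative step will be showing that a single iteration buys exactly the factor $c = 2^{\Theta((1/t)\log(2et/s))}$ in ground-set size, which requires the cross-intersection $\le 1$ condition to enter the averaging argument in a substantial way. In the lower bound, the design of the auxiliary graph $H$ and the pattern rule must balance the two coverage probabilities at the optimal bias $p = s/(et)$; the $\log(2et/s)$ factor in the final exponent emerges from this optimization.
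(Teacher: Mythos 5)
Your overall architecture --- reduce to a graph on the ground set via the observation that an R-free $s$-set and a B-free $t$-set meet in at most one vertex, then prove the upper bound by iteration and the lower bound by a random graph-based colouring --- matches the paper's, which formalizes the reduction as the identity $f_{s+t-2}(n,s,t)=g(n,s,t)$ for a purely graph-theoretic function $g$. But both of your quantitative cores have genuine gaps. For the upper bound, the step ``pass to a sub-ground-set of size $N/c$ on which $n$ decrements by one'' is precisely the thing you would need to invent, and as stated it is unclear what invariant is carried: an Erd\H{o}s--Szekeres-type neighbourhood step decrements a clique/independence parameter, not $n$, and ``either find a B-bad $n$-set directly, or \dots'' begs the question. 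The paper's mechanism is different and simpler: apply $R(a,b)\le\binom{a+b}{a}$ with $a=3n/t$ and $b=3n/s$ to extract, one at a time, $s+t-3$ pairwise disjoint sets, each a clique of size $a$ or an independent set of size $b$; by pigeonhole one obtains either $t-1$ disjoint $a$-cliques or $s-1$ disjoint $b$-independent sets, and the union of $t-1$ cliques (of total size at least $1.5n$) has independence number at most $t-1$, hence contains an $n$-set with no independent $t$-set. That ``disjoint homogeneous pieces plus pigeonhole'' idea is the missing ingredient; your vertex-by-vertex scheme does not visibly terminate in a certificate.

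For the lower bound, your two-layer construction is essentially the paper's: take $H=\calG(N,p)$ and colour a $k$-set blue if it spans an $s$-clique of $H$ and red if it spans an independent $t$-set (these cannot coexist inside a $k$-set). The issue is the probability estimate. After the union bound over $\binom{N}{n}\le 2^{n\log N}$ candidate $n$-sets, you need $\Pr[\calG(n,p)\ \text{is } K_s\text{-free}]\le 2^{-(1+o(1))pn^2/(4s)}$, together with the analogous bound for independence number below $t$. Janson-type bounds do not deliver this once $s$ grows: the extended Janson inequality gives exponent $\mu^2/(2\Delta)$, and the contribution to $\Delta$ from pairs of $s$-cliques sharing an edge reduces this to about $pn^2/s^4$, a loss of a factor of order $s^3$ that destroys the claimed exponent $\frac{n}{32t}\log(\frac{2et}{s})$ in the regime where $s$ may grow up to $\log n/(120\log\log n)$. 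The paper instead invokes the Balogh--Samotij container lemma (combined with a supersaturation result) to obtain $\Pr[\calG(n,p)\ \text{is } K_s\text{-free}]\le 2^{n^{2-1/(8s)}}(1-p)^{(1+o(1))n^2/(4s)}$. A smaller point: your bias $q=s/(et)$ should be $p=\frac{s}{2et}\log(\frac{2et}{s})$; without the logarithmic factor the clique side of the computation loses the $\log(\frac{2et}{s})$ in the final exponent. So the plan is directionally right, but the two decisive estimates --- the disjoint-pieces pigeonhole and the container-strength probability bound --- are not in place.
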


One can reduce finding $f_{s+t-2}(n,s,t)$ to a graph Ramsey problem. Let $g(n,s,t)$ be the smallest integer $N$ such that in any $N$-vertex graph $G$ one can find an $n$-element vertex set which does not contain both a size-$s$ clique and a size-$t$ independent set. The key point (see \cref{lem:connection}) is 
\begin{equation*}
f_{s+t-2}(n,s,t)=g(n,s,t) \enskip \text{for} \enskip n \ge s+t-2.
\end{equation*}
\cref{thm:ramsey} then follows from the following bounds on $g(n,s,t)$, which may be of independent interest.

\begin{restatable}{prop}{ramsey}\label{prop:ramsey}
Suppose that $n$ is sufficiently large and $2\le s\le t \le \log n/(120\log \log n)$. Then
\[
2^{\frac{n}{32t}\log(\frac{2et}{s})} \le g(n,s,t) \le 2^{\frac{4n}{t}\log(\frac{2et}{s})}.
\]	
\end{restatable}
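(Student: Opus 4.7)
I would use the probabilistic method with the Erd\H{o}s--R\'enyi random graph. Set $N = \lceil 2^{\frac{n}{32t}\log(2et/s)} \rceil$ and let $G \sim G(N,p)$ with $p := c\log(2et/s)/t$ for an appropriate absolute constant $c$. For a fixed $n$-subset $S \subseteq V(G)$, Janson's inequality applied to the family of potential $K_s$'s supported on $S$ gives
\[
\Prob[G[S] \text{ contains no } K_s] \le \exp\bigl(-\Omega(\mu_s^2/(\mu_s+\Delta_s))\bigr),
\]
with $\mu_s = \binom{n}{s}p^{\binom{s}{2}}$ and $\Delta_s$ the standard pair-overlap term; a symmetric estimate with $1-p$ in place of $p$ governs ``$G[S]$ contains no independent $t$-set''. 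Routine Stirling bounds -- using $t \le \log n/(120\log\log n)$ to control $\Delta_s,\Delta_t$ -- show both Janson exponents exceed $2n\log N$. A union bound over the $\binom{N}{n} \le \exp(n\log N)$ subsets then produces, with positive probability, a graph $G$ in which every $n$-subset contains both a $K_s$ and an independent $t$-set, yielding $g(n,s,t) > N$.

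\textbf{Upper bound.} Set $N = \lceil 2^{\frac{4n}{t}\log(2et/s)} \rceil$ and suppose for contradiction that every $n$-subset of some $G$ on $N$ vertices contains both a $K_s$ and an independent $t$-set. A standard double count (each $n$-subset contributes at least one $K_s$; each $K_s$ lies in $\binom{N-s}{n-s}$ of them) yields
\[
\#K_s(G) \ge \binom{N}{s}\Big/\binom{n}{s}, \qquad \#\{\text{independent } t\text{-sets of } G\} \ge \binom{N}{t}\Big/\binom{n}{t}.
\]
My plan is to combine an Erd\H{o}s--Szekeres-style vertex-peeling recursion $g(n,s,t) \le g(n-1,s-1,t)+g(n-1,s,t-1)+1$ -- obtained by selecting a vertex with $\ge(N-1)/2$ neighbors or $\ge(N-1)/2$ non-neighbors and recursing on the larger side -- with a \emph{bulk} peeling step powered by the above density surplus. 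The naive recursion alone gives only $2^{O(n)}$-type bounds with no useful $t$-dependence. To improve it, I would apply dependent random choice to the $K_s$-hypergraph (and analogously to the independent-$t$-set hypergraph in $\bar G$): the many-$K_s$ condition supplies a clique template whose common neighborhood is large, letting one append roughly $t/\log(t/s)$ vertices to the target subset in a single step. A Kruskal--Katona-type estimate relays the density hypothesis from one step to the next, so that $\Theta(n)$ such bulk steps close the argument at the advertised rate.

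\textbf{Main obstacle.} The upper bound is the hard direction. The vertex-by-vertex recursion loses a factor of two per step, which is hopelessly wasteful once we need the $o(n)$-exponent regime as $t \to \infty$. The saving must come from converting the many-$K_s$ and many-$\overline{K_t}$ information into a bulk-peeling rate of $\sim t/\log(t/s)$ vertices per step. The quantitative content of \cref{prop:ramsey} is precisely that this rate can be sustained for $\Theta(n)$ iterations while keeping both density conditions alive on the residual graph; verifying this interplay -- in particular, controlling how the $K_s$- and $\overline{K_t}$-density hypotheses decay after each bulk peel via Kruskal--Katona -- is the heart of the proof.
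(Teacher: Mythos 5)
Your proposal departs from the paper in both directions, and each direction has a genuine gap.

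\textbf{Lower bound.} The obstacle is quantitative: Janson's inequality is too lossy in this regime. For the event that a fixed $n$-set spans no independent $t$-set, the overlap sum is dominated by pairs of $t$-sets sharing exactly two vertices, so $\Delta_t/\mu_t^2=\Theta(t^4/n^2)$ and the generalized Janson inequality yields only $\exp\bigl(-\Theta(n^2/t^4)\bigr)$. To survive the union bound over $\binom{N}{n}\le 2^{n\log N}$ sets you need this probability to be at most $2^{-\frac{n^2}{32t}\log(2et/s)}$, which forces $t^3\log(2et/s)=O(1)$, i.e.\ $t$ bounded by an absolute constant; the bound therefore fails throughout the interesting range $t\to\infty$. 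The clique side has the same defect: the shared-edge term carries a combinatorial factor of order $s^4$, so Janson gives exponent $\Theta(n^2p/s^4)$ where $\Theta(n^2p/s)$ is required. The point is that the zero-count lower tail here is a large-deviation event whose true rate is governed by the structure of $K_s$-free (resp.\ $K_t$-free) graphs, not by $\mu^2/\Delta$; this is precisely why the paper invokes the Balogh--Samotij container lemma (\cref{lem:containers}), which gives $\Prob[\text{no }K_s]\le 2^{n^{2-1/(8s)}}(1-p)^{(1+o(1))n^2/(4s)}$ and the analogous bound $2^{n^{2-1/(8t)}}p^{(1+o(1))n^2/(4t)}$ for independent $t$-sets. (A smaller issue: with $p=c\log(2et/s)/t$ for an absolute $c$ you cannot balance both events; the paper takes $p=\frac{s}{2et}\log(\frac{2et}{s})$.)

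\textbf{Upper bound.} What you have written is a plan rather than a proof, and you flag its core as unverified; moreover it overcomplicates what is in fact the easy direction. The recursion $g(n,s,t)\le g(n-1,s-1,t)+g(n-1,s,t-1)+1$ is the Ramsey recursion and is not established for $g$, and none of the dependent-random-choice or Kruskal--Katona machinery is needed. The paper's argument is an iterated Erd\H{o}s--Szekeres extraction: with $N=2\cdot 2^{\frac{3n}{t}\log(2et/s)}$, every graph on at least $N/2$ vertices contains a clique of size $a=3n/t$ or an independent set of size $b=3n/s$; extracting such a set $s+t-2$ times removes at most $N/2$ vertices, and by pigeonhole one obtains either $t-1$ disjoint cliques of size $a$ or $s-1$ disjoint independent sets of size $b$. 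In the first case the union has at least $1.5n$ vertices and independence number at most $t-1$, hence no independent $t$-set; in the second it has at least $1.5n$ vertices and no $K_s$. The observation you are missing is that the target $n$-set need only avoid \emph{one} of the two structures, and a union of $t-1$ cliques (resp.\ $s-1$ independent sets) does so automatically.
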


\subsection{Semisaturated Ramsey numbers}
Given an integer $r\ge 2$, let $\calG_r$ denote the family of complete graphs whose edges are colored with $r$ colors $1,2,\ldots,r$. For $G,G'\in \calG_r$, we say $G'$ extends $G$ if $G'$ can be obtained from $G$ by iteratively adding a new vertex and colored edges connecting the new vertex with the existing ones. A member $G$ of $\calG_r$ is called $(r,K_{k})$-{\em semisaturated} if every $G'\in \calG_r$ that extends $G$ must contain a new monochromatic $K_{k}$. 

We are interested in the smallest size an $(r,K_k)$-semisaturated graph can have, that is,
\[
\ssat_r(K_k):=\min\big \{|V(G)|\colon \text{$G\in \calG_r$ is $(r,K_{k})$-semisaturated}\big \}.
\]
From the definition, it is clear that $\ssat_2(K_k)< R(k)$, and more generally, $\ssat_r(K_k)$ is less than the $r$-color Ramsey number of $K_k$.
Dam\'asdi et al. \cite{DKMTWZ20} initiated the study of $\ssat_r(K_k)$, establishing the following result.

\begin{thm}[Dam\'asdi, Keszegh, Malec, Tompkins, Wang, Zamora]\label{thm:saturated}
\textcolor{white}{ }
\begin{itemize}
	\item[\rm (i)] $(r-1)k^2-(3r-4)k+(2r-3)\le \ssat_r(K_k)$, and the equality holds when $r=2$.
	\item[\rm (ii)] $\ssat_r(K_k) \le (k-1)^r$.
	\item[\rm (iii)] $\ssat_r(K_k) \le 48k^2r^{k^2}$.
\end{itemize}
\end{thm}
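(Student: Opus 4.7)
My plan starts with the central reformulation: extending $G$ by a vertex $v$ and assigning the coloring $\chi\colon V(G)\to [r]$ to its incident edges produces a new monochromatic $K_k$ precisely when some color class $\chi^{-1}(c)$ contains a color-$c$ copy of $K_{k-1}$ in $G$. Hence $G\in\calG_r$ is $(r,K_k)$-semisaturated if and only if, for every partition $V(G)=V_1\sqcup\dots\sqcup V_r$, some part $V_c$ contains a color-$c$ clique of size $k-1$. All three parts will flow from this rainbow-partition criterion.

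For the upper bound (ii) I would use the \emph{first-differing-coordinate} construction on $V=[k-1]^r$: color the edge between $(u_1,\dots,u_r)$ and $(w_1,\dots,w_r)$ by the smallest $c$ with $u_c\ne w_c$. A color-$c$ clique then shares its first $c-1$ coordinates and uses pairwise distinct $c$-th coordinates, so it has at most $k-1$ vertices and no monochromatic $K_k$ appears. Semisaturation follows by induction on $r$: given a coloring $\chi$ of $V$, either some column over a fixed first $r-1$ coordinates is assigned color $r$ throughout (yielding a color-$r$ copy of $K_{k-1}$), or picking a non-$r$ vertex in every such column produces a coloring of $[k-1]^{r-1}$ by $r-1$ colors to which the inductive hypothesis applies.

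For the lower bound (i), I would rewrite the target as $\ssat_r(K_k)\ge(k-1)\bigl[(r-1)(k-2)+1\bigr]$ and aim for the recursion $\ssat_r(K_k)\ge\ssat_{r-1}(K_k)+(k-1)(k-2)$ on top of the trivial base $\ssat_1(K_k)=k-1$. For $r=2$ I would carve disjoint color-$1$ copies of $K_{k-1}$ iteratively: apply the partition criterion with $V_1=V(G)$, $V_2=\varnothing$ to produce a color-$1$ $K_{k-1}$, move it into $V_2$, and repeat; since each moved piece is internally color-$1$ (so contributes no color-$2$ clique for $k\ge 3$), a color-$2$ $K_{k-1}$ in $V_2$ can appear only as a transversal hitting $k-1$ distinct carved groups, forcing $n\ge(k-1)^2$. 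The inductive step analogously extracts a maximal packing of disjoint color-$r$ $K_{k-1}$'s by driving the same iteration on color $r$, then reduces the remainder to an $(r-1,K_k)$-semisaturation problem using the partition condition with $V_r$ set to the packing.

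For the upper bound (iii) I would use a random construction: take $N=\Theta(k^2r^{k^2})$ and color $E(K_N)$ uniformly at random from $[r]$. For each candidate $\chi\colon V(K_N)\to[r]$, Janson's inequality bounds the probability that every $\chi^{-1}(c)$ is free of a color-$c$ copy of $K_{k-1}$; the expected count of such cliques in each color class is $\Theta\bigl((n_c/r^{(k-2)/2})^{k-1}\bigr)$, and the choice of $N$ is tuned so that the Janson exponent beats the $r^N$ union bound over $\chi$. The main obstacles I anticipate are the inductive reduction in (i) --- decoupling the color-$r$ packing from the remaining semisaturation condition requires careful bookkeeping of how the partition condition survives on the remainder, especially showing that the extra $(k-1)(k-2)$ vertices are not shared with the inductively-guaranteed $\ssat_{r-1}(K_k)$ vertices --- and calibrating the Janson/union-bound constants in (iii) to reach the explicit factor $48$.
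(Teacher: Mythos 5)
First, note that the paper does not prove this statement at all: \cref{thm:saturated} is quoted verbatim from Dam\'asdi et al.\ \cite{DKMTWZ20} and used as a black box, so there is no in-paper proof to compare against. Judging your proposal on its own terms: the rainbow-partition reformulation is correct (with the easy remark that a multi-vertex extension contains a single-vertex extension, so only one-vertex extensions matter), and your first-differing-coordinate construction for (ii) is sound --- the induction on $r$ works because two vertices chosen from distinct columns first differ in one of the first $r-1$ coordinates, so the selected transversal really is an isomorphic copy of the $(r-1)$-dimensional pattern. For (iii), the Janson/union-bound scheme is viable and in fact has enormous slack at $N=\Theta(k^2r^{k^2})$: the expected number of color-$c$ copies of $K_{k-1}$ inside a class of size $N/r$ is $r^{\Theta(k^3)}$, vastly exceeding the $N\log r$ needed to beat the $r^N$ union bound, so you would end up proving a bound like $r^{O(k)}$ rather than landing on the constant $48$; the stated inequality then follows a fortiori, though the term $\Delta$ in Janson still needs the routine verification.

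The one genuine gap is the inductive step of (i). As written, you extract a packing of disjoint \emph{color-$r$} copies of $K_{k-1}$ and then apply the partition criterion with $V_r$ equal to that packing. This is self-defeating: $V_r$ then contains a color-$r$ copy of $K_{k-1}$ by construction, so the partition criterion is satisfied by $c=r$ and tells you nothing about $V\setminus V_r$, and the reduction to $(r-1,K_k)$-semisaturation collapses. The fix is to mirror your own $r=2$ argument exactly: iteratively carve disjoint monochromatic copies of $K_{k-1}$ in a color \emph{different} from $r$ (say color $1$) and place them into the part $V_r$. A union $W$ of $j\le k-2$ such internally-color-$1$ cliques meets every color-$r$ clique in at most $j\le k-2$ vertices, so $V_r=W$ can never host a color-$r$ copy of $K_{k-1}$; this both drives the iteration (forcing the next color-$1$ clique to appear in $V\setminus W$) and, once $|W|=(k-1)(k-2)$, shows that every $(r-1)$-partition of $V\setminus W$ must succeed, yielding $\ssat_r(K_k)\ge \ssat_{r-1}(K_k)+(k-1)(k-2)$ and hence your rewritten bound $(k-1)\bigl[(r-1)(k-2)+1\bigr]$. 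Finally, the clause ``equality holds when $r=2$'' is not addressed by your lower-bound argument alone; it follows by pairing it with your part (ii) construction at $r=2$, which gives the matching upper bound $(k-1)^2$.
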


The upper bound from (ii) is polynomial in $k$ for fixed $r$, while the upper bound from (iii) is polynomial in $r$ for fixed $k$. The second main result of this paper gives rather sharp estimates for $\ssat_r(K_k)$ in these regimes.

\begin{restatable}{thm}{fixed}\label{thm:saturated fixed k or r}
\textcolor{white}{ }
\begin{itemize}
\item[\rm (i)] For fixed $r\ge 2$, one has
\[
\ssat_r(K_k)=\Theta(k^2).
\]
\item[\rm (ii)] For all $k\ge 3$, there exists a constant $C=C(k)>0$ such that for all $r \ge 3$, 
\[
\tfrac14 r^2 \le \ssat_r(K_k) \le C(\log r)^{8(k-1)^2}r^2.
\]
\end{itemize}
\end{restatable}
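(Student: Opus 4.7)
\textbf{Part (i).} The lower bound $\ssat_r(K_k)=\Omega(k^2)$ is immediate from \cref{thm:saturated}\,(i). For the matching upper bound $O_r(k^2)$, my plan is to establish the inductive inequality $\ssat_{r+1}(K_k)\le\ssat_r(K_k)+c(k-1)^2$ for an absolute constant~$c$. The base case $r=2$ is the extremal grid on $(k-1)^2$ vertices whose colour classes are the rows and the complete $(k-1)$-partite graph on the columns. To pass from $r$ to $r+1$, I would take an $(r,K_k)$-semisaturated graph~$G$, attach a fresh $(k-1)\times(k-1)$ gadget~$W$, colour the interior of~$W$ in the same row/transversal pattern but using colour~$r+1$ in place of one of the previous colours, and colour every bipartite edge from~$W$ to~$V(G)$ with colour~$r+1$. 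The verification is a case analysis on the adversary's extension: if the extension uses colour~$r+1$ ``heavily'' on~$V(G)$, then the bipartite colour-$(r+1)$ edges together with the gadget's internal $(2,K_k)$-semisaturation force a new monochromatic~$K_k$; otherwise the extension restricted to~$V(G)$ is essentially an $r$-colouring, and the semisaturation of~$G$ does the job. Iterating gives the claim.

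\textbf{Part (ii), lower bound.} I would first note the monotonicity $\ssat_r(K_{k+1})\ge\ssat_r(K_k)$, which follows because any new monochromatic~$K_{k+1}$ produced by an extension contains a new monochromatic~$K_k$; hence it is enough to prove $\ssat_r(K_3)\ge r^2/4$. Contrapositively, given an $r$-edge-coloured~$K_n$ on $n<r^2/4$ vertices with colour classes $G_1,\dots,G_r$, I aim to construct a partition $V=V_1\sqcup\cdots\sqcup V_r$ with $V_i$ independent in~$G_i$; such a partition, viewed as the colouring of the new edges in an extension, creates no new monochromatic~$K_3$. Picking $f:V\to[r]$ uniformly at random, each ``bad event'' (an edge $uv$ of colour~$i$ with $f(u)=f(v)=i$) has probability $r^{-2}$ and shares variables with at most $2(n-1)$ other bad events; a carefully tuned Lov\'asz Local Lemma, possibly combined with a short greedy deletion step that shaves off a few violating edges, produces the required partition as soon as $n<r^2/4$.

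\textbf{Part (ii), upper bound.} My plan is a probabilistic construction: take $K_n$ on $n=C(\log r)^{8(k-1)^2}r^2$ vertices and colour each edge independently and uniformly with one of~$r$ colours. For any fixed partition $f:V\to[r]$ with classes $V_i=f^{-1}(i)$, the random subgraphs $G_i[V_i]$ are mutually independent because they live on disjoint edge-sets, whence
\[
\Prob[\text{$f$ fails}]=\prod_{i=1}^r\Prob\bigl[G_i[V_i]\text{ is $K_{k-1}$-free}\bigr].
\]
I would bound each factor by Janson's inequality with mean $\mu_i=\binom{|V_i|}{k-1}r^{-\binom{k-1}{2}}$ and correlation term~$\Delta_i$, combine the exponents across $i$ using convexity $\sum_i\binom{|V_i|}{k-1}\ge r\binom{n/r}{k-1}$, and union-bound over all $r^n$ partitions. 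The failure probability then falls below one provided the polylog factor $(\log r)^{8(k-1)^2}$ in~$n$ is large enough, and a random colouring of $K_n$ at this size is therefore $(r,K_k)$-semisaturated with positive probability. The hardest step will be uniformly controlling Janson across the whole range of partition-class profiles, where one must pass between the regimes $\Prob\le e^{-\mu_i}$ and $\Prob\le e^{-\mu_i^2/\Delta_i}$; the exponent $8(k-1)^2$ on $\log r$ is precisely what is needed to absorb the $r^n$ from the union bound under the weaker of the two Janson estimates. The gadget-seam verification in part~(i) is the secondary technical difficulty.
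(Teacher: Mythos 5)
Your plan has two fatal construction errors and one quantitative gap.

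The most serious problem is the upper bound in part (ii). A uniformly random $r$-colouring of $K_n$ with $n=C(\log r)^{8(k-1)^2}r^2$ is \emph{not} $(r,K_k)$-semisaturated once $k\ge 5$: consider the balanced partition with $|V_i|=n/r$. The expected number of colour-$i$ copies of $K_{k-1}$ inside $V_i$ is $\binom{n/r}{k-1}r^{-\binom{k-1}{2}}=O\bigl(r^{(k-1)(4-k)/2}(\log r)^{O_k(1)}\bigr)$, which tends to $0$ for every $k\ge 5$; summing over $i$ and applying Markov's inequality, with high probability no $G_i[V_i]$ contains a $K_{k-1}$ of colour $i$, so the extension by a vertex whose colour-$i$ neighbourhood is $V_i$ creates no new monochromatic $K_k$. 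No tuning of Janson's inequality can rescue a vanishing first moment. This is exactly why the paper does not use a uniform random colouring: it observes $\ssat_r(K_k)\le P_r(k-1)$ and quotes the Fox--Grinshpun--Liebenau--Person--Szab\'o bound $P_r(k-1)\le C(\log r)^{8(k-1)^2}r^2$, whose underlying colour pattern is built from the Erd\H{o}s--Rogers-type graphs of Dudek, Retter and R\"odl, which are far richer in $(k-1)$-cliques on small vertex subsets than $G(m,1/r)$.

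Your part (i) gadget also fails. Say $W$ is internally coloured with colours $r+1$ and $j$ and all $W$--$V(G)$ edges get colour $r+1$; since $r+1\ge 3$ there is a colour $c\notin\{r+1,j\}$. Extend by a vertex $v$ whose edges to $V(G)$ all receive colour $r+1$ and whose edges to $W$ all receive colour $c$. Colour $r+1$ is absent inside $V(G)$ and colour $c$ is absent inside $W$, so $N_{r+1}(v)$ and $N_c(v)$ contain no monochromatic $K_{k-1}$ of the relevant colour and $v$ lies in no new monochromatic $K_k$; your ``heavily/lightly'' dichotomy never engages because the adversary uses colour $r+1$ on all of $V(G)$ while giving $v$ no colour-$(r+1)$ edge into $W$. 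The paper avoids induction altogether and uses one global affine-plane construction ($q\approx rk$ prime, the $q^2+q$ lines split into $r$ classes, a point--line incidence bound guaranteeing $k$ collinear points of each class in every $q^2/r$-subset). Finally, for the lower bound in (ii): your reduction to a partition with each $V_i$ independent in $G_i$, together with the monotonicity in $k$, is sound, but the symmetric Local Lemma with $p=r^{-2}$ and dependency degree $2(n-2)$ only reaches $n\le r^2/(2e)+O(1)$, and the promised ``greedy deletion step'' closing the gap to $r^2/4$ is not an argument. The paper instead peels off a Tur\'an independent set of size at least $r/2$ from a sparsest colour class and recurses via $\ssat_r(K_k)\ge\ssat_{r-1}(K_k)+r/2$, which cleanly yields the constant $\tfrac14$.
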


The proof of the upper bound in \cref{thm:saturated fixed k or r} (i) requires $k$ to be large with respect to $r$.  Moreover, the exponent of the $\log r$ factor in the second upper bound depends on the size of the clique.  Therefore, we also prove an upper bound on $\ssat_r(K_k)$ which is polynomial in both parameters.

\begin{restatable}{thm}{general}
\label{thm:saturated general}	
For every $k\ge 2$ and every $r\ge 3$, 
\[
\ssat_r(K_k) \le 8k^3r^3.
\]
\end{restatable}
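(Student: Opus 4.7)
The plan is to construct explicitly an $(r,K_k)$-semisaturated $r$-edge-colored clique on at most $4r^2k^2 \le 8k^3r^3$ vertices, using an affine plane over a finite field. First I would reformulate the problem: $G \in \calG_r$ is $(r,K_k)$-semisaturated if and only if for every partition $V(G) = V_1 \sqcup \cdots \sqcup V_r$, some class $V_c$ contains a color-$c$ monochromatic $K_{k-1}$ of $G$. Indeed, such a partition corresponds to the color-$c$ neighborhoods of a newly added vertex $u$, and a color-$c$ copy of $K_{k-1}$ inside $V_c$ together with $u$ gives the required new monochromatic $K_k$.

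The case $k=2$ is trivial (a single vertex works), so assume $k\ge 3$. By Bertrand's postulate I fix a prime $q$ with $r(k-2)+1 \le q \le 2r(k-2)+2 \le 2rk$, set $V(G):=\F_q^{2}$, and observe $|V(G)| = q^2 \le 4r^2k^2 \le 8k^3r^3$. I then choose $r$ distinct one-dimensional subspaces $d_1,\dots,d_r$ of $\F_q^{2}$ (possible since there are $q+1\ge r$ of them) and color the edge $\{u,v\}$ with color $c$ if $v-u\in d_c$ for some $c\in[r]$, and with color $1$ otherwise. This rule is well defined because each nonzero vector of $\F_q^{2}$ lies in a unique one-dimensional subspace.

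For the verification, the key observation is that a color-$c$ monochromatic $K_{k-1}$ in $G$ has all its pairwise differences in $d_c$, and since $d_c$ is one-dimensional, all of its vertices must lie on a single affine line parallel to $d_c$. Each direction $d_c$ partitions $\F_q^{2}$ into $q$ parallel lines of $q$ points each. Now suppose a partition $V_1 \sqcup \cdots \sqcup V_r$ violated the semisaturation condition: then for every $c$ no line parallel to $d_c$ would contain $k-1$ vertices of $V_c$, so each of the $q$ such lines contributes at most $k-2$ vertices, giving $|V_c| \le q(k-2)$. Summing over $c$ yields $q^2 = |V(G)| \le rq(k-2)$, hence $q \le r(k-2)$, contradicting the choice of $q$.

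The argument is quite short once the construction is written down; the main conceptual step is identifying the affine plane as the right algebraic gadget, whose parallel classes simultaneously supply many monochromatic $(k-1)$-cliques in each color and produce a clean pigeonhole bound across classes. The only real technicality is invoking Bertrand's postulate to secure a prime $q$ in the required window, which could equally well be replaced by a prime-power argument.
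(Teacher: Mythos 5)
Your proof is correct, and it takes a genuinely different route from the paper's, ending up with a stronger bound. The paper works with the cruder sufficient condition recorded in its Observation: one exhibits $r$ edge-disjoint subgraphs such that \emph{every} color class contains a $K_{k-1}$ inside \emph{every} vertex subset of size $|V|/r$; this is realized in $\F_q^3$ with $q\in[kr,2kr]$, each color being a union of $q$ parallel classes of lines, giving $q^3\le 8k^3r^3$. You instead use the exact reformulation of semisaturation --- for every partition $V=V_1\sqcup\cdots\sqcup V_r$, only color $c$ needs a $K_{k-1}$ inside its own part $V_c$ --- which permits a single parallel class per color in the plane $\F_q^2$ and replaces the pigeonhole on the largest part by the exact count $q^2=\sum_c|V_c|\le rq(k-2)$. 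I checked the details: the reduction to one-vertex extensions is valid (a multi-vertex extension already creates a new monochromatic $K_k$ at its first added vertex); the ``color $1$ otherwise'' rule is harmless since you only ever exhibit color-$c$ cliques along lines in direction $d_c$; you have $q+1\ge r$ directions available because $q\ge r(k-2)+1\ge r+1$ for $k\ge 3$; and $k-1\le q$ so the lines are long enough. The payoff is $\ssat_r(K_k)\le q^2\le 4k^2r^2$, which not only implies the stated $8k^3r^3$ but is $O_k(r^2)$ for fixed $k$ and $O_r(k^2)$ for fixed $r$. That would sharpen the upper bound in \cref{thm:saturated fixed k or r}\,(ii) by removing the polylogarithmic factor and would answer the paper's question of whether $\ssat_r(K_k)=\omega(r^2)$ in the negative --- a strong enough consequence that it deserves a second look, but I find no gap in the argument (the reason it does not contradict the $\Theta(r^2\log r)$ bound for $P_r(2)$ is that your color classes are very far from $K_k$-free, which semisaturation does not require).
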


Our proofs of \cref{thm:saturated fixed k or r,thm:saturated general} exploit connections between $\ssat_r(K_k)$ and the vertex Folkman numbers.
To learn more about the vertex Folkman numbers, we refer the interested readers to \cite{HRS18} and the references therein.

\subsection{Organization and notation}

The paper is organized as follows. In \cref{sec:ramsey} we prove \cref{thm:ramsey}. In \cref{sec:saturated} we justify \cref{thm:saturated fixed k or r,thm:saturated general}. We close this paper with some concluding remarks.

We write $[N]$ for the set $\{1,2,\ldots,N\}$, while $\binom{X}{k}$ is the family of all $k$-element subsets of a set $X$. Given two functions $f$ and $g$ of some underlying parameter $n$, we employ the following asymptotic notation: $f=o(g)$ or $g=\omega(f)$ means that $\lim_{n\rightarrow\infty}f(n)/g(n)=0$, while $f=O(g)$ means that $|f|\le Cg$ for some absolute constant $C>0$. We omit floor and ceiling signs where the argument is unaffected. Finally, all logarithms are to the base $2$. 

\section{Proof of \cref{thm:ramsey}}\label{sec:ramsey}

Recall that $g(n,s,t)$ is the smallest integer $N$ such that in any $N$-vertex graph $G$ one can find an $n$-element vertex set which does not contain both a size-$s$ clique and a size-$t$ independent set. On the other hand, $f_k(n,s,t)$ is the minimum integer $N$ with the property that, in any red-blue coloring of the $k$-subsets of an $N$-element set, there exists a subset of size $n$ for which either each of its $s$-subsets is contained in some red $k$-subset, or each of its $t$-subsets is contained in some blue $k$-subset. Our starting point is the following connection between $f_k(n,s,t)$ and $g(n,s,t)$.

\begin{lem}\label{lem:connection}
Provided $s,t \ge 2$ and $n\ge s+t-2$ we have
\[
f_{s+t-2}(n,s,t)=g(n,s,t).
\]
\end{lem}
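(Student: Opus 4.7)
The plan is to prove the two inequalities $g(n,s,t)\le f_{s+t-2}(n,s,t)$ and $f_{s+t-2}(n,s,t)\le g(n,s,t)$ separately using two different reductions. Both exploit the arithmetic $s+t-1>s+t-2$, which means that no set of size $s+t-2$ can contain both a $K_s$ and an $I_t$ of a graph. This is precisely what makes the threshold $k=s+t-2$ special.

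For $g\le f$, I would take an arbitrary graph $G$ on $[N]$ with $N=f_{s+t-2}(n,s,t)$ and define a two-coloring $\chi_G$ of $\binom{[N]}{s+t-2}$ by painting $B$ red iff $G[B]$ contains no $K_s$. Applying the defining property of $f$ to $\chi_G$ produces an $n$-set $S$ such that either every $s$-subset of $S$ lies in some red $B$ or every $t$-subset of $S$ lies in some blue $B$. In the former case, if $S$ contained a $K_s$ copy $A$, the red witness $B\supseteq A$ would already contain $K_s$ and hence be blue -- a contradiction; so $S$ has no $K_s$. In the latter case, if $S$ contained an $I_t$ copy $T$, the blue witness $B\supseteq T$ would contain some $K_s$; but writing $B=T\cup T'$ with $|T'|=s-2$, any $K_s$ in $B$ uses at most one vertex of the independent set $T$ and therefore at least $s-1$ vertices in $T'$, which is impossible. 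So $S$ has no $I_t$. Either way $S$ is $g$-good.

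For $f\le g$, I would start from any coloring $\chi$ of $\binom{[N]}{s+t-2}$ with $N=g(n,s,t)$ and introduce two families: $\mathcal{A}$, the family of $s$-subsets $A$ of $[N]$ such that every $(s+t-2)$-set $B\supseteq A$ is blue, and $\mathcal{T}$, the family of $t$-subsets $T$ of $[N]$ such that every $(s+t-2)$-set $B\supseteq T$ is red. The key observation is that no pair $\{i,j\}$ can lie in both an $A\in\mathcal{A}$ and a $T\in\mathcal{T}$: otherwise $|A\cap T|\ge 2$ gives $|A\cup T|\le s+t-2$, and any $B\supseteq A\cup T$ of size $s+t-2$ would have to be simultaneously blue (as $B\supseteq A$) and red (as $B\supseteq T$). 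I would then define a graph $G_\chi$ on $[N]$ by declaring $\{i,j\}$ an edge exactly when it lies in some $A\in\mathcal{A}$. Every $A\in\mathcal{A}$ becomes a $K_s$ in $G_\chi$, and the pairwise-disjointness observation makes every $T\in\mathcal{T}$ an $I_t$. Applying the $g$-property to $G_\chi$ yields an $n$-set $S$ with no $K_s$ or no $I_t$ in $G_\chi[S]$; taking contrapositives, either every $s$-subset of $S$ is outside $\mathcal{A}$ (so lies in some red $B$) or every $t$-subset of $S$ is outside $\mathcal{T}$ (so lies in some blue $B$), making $S$ $f$-good.

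The only delicate point is the pair-level disjointness of the "red-shadowless" $s$-sets $\mathcal{A}$ and the "blue-shadowless" $t$-sets $\mathcal{T}$; once that is in hand, both directions reduce to bookkeeping that uses the threshold $k=s+t-2$ at exactly one place in each direction.
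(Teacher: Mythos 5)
Your proof is correct and takes essentially the same approach as the paper: the same auxiliary graph $G_\chi$ built from the ``all-blue-shadow'' $s$-sets and ``all-red-shadow'' $t$-sets, with the same key disjointness argument via $|A\cup T|\le s+t-2$, and for the other inequality the same coloring of $(s+t-2)$-sets according to whether they contain a $K_s$ (the paper phrases this direction contrapositively via a bad graph on $g(n,s,t)-1$ vertices, but the content is identical).
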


\begin{proof}
Throughout the proof,  set $k=s+t-2$.
In order to show $f_k(n,s,t) \le g(n,s,t)$, one needs to justify that in any red-blue coloring $\chi$ of the $k$-subsets of $[g(n,s,t)]$ there exists a set of size $n$ such that either each of its $s$-subsets is contained in some red $k$-subset, or each of its $t$-subsets is contained in some blue $k$-subset.  For this purpose, we define a graph $G_{\chi}$ on the vertex set $[g(n,s,t)]$ as follows. Consider a pair $\{x,y\}$ of vertices.  If $\{x,y\}$ is contained in some $s$-subset $S$ for which all $k$-subsets containing $S$ are blue, then we assign $\{x,y\}$ to $E(G_{\chi})$.  If $\{x,y\}$ is contained in some $t$-subset $T$ such that all $k$-subsets containing $T$ are colored red, then we declare $\{x,y\}\notin E(G_{\chi})$.  Finally, if $\{x,y\}$ satisfies neither of the previous two conditions we arbitrarily decide whether $\{x,y\}$ belongs to $E(G_{\chi})$ or not.   The graph $G_{\chi}$ is well-defined: if $\{x,y\}$ satisfies both conditions, then $|S\cup T| \le s+t-2=k$ and every $k$-subset containing $S\cup T$ receives two colors, which is impossible. 

By the definition of $g(n,s,t)$, one can find an $n$-element vertex set $V$ which does not contain either a size-$s$ clique, or a size-$t$ independent set.  Without loss of generality we can assume that the former case occurs. We claim that every $s$-subset $S$ of $V$ is contained in some red $k$-subset of $[g(n,s,t)]$, implying $f_k(n,s,t) \le g(n,s,t)$.  Indeed, if $S$ is contained in blue $k$-subsets only, then $S$ induces a clique of size $s$ in $G_{\chi}$, a contradiction. 

What's left is to show that $f_k(n,s,t) \ge g(n,s,t)$. To this end, let $N=g(n,s,t)-1$.  According to the definition of $g(n,s,t)$, there exists a graph $G$ on $[N]$ so that every set of $n$ vertices contains both a clique of size $s$ and an independent set of size $t$. Consider a red-blue coloring of the $k$-subsets of $[N]$ defined as follows.  Given a $k$-subset $K$, we color $K$ blue if $G[K]$ contains a clique of size $s$, color $K$ red if $G[K]$ contains an independent set of size $t$,  and color $K$ arbitrarily otherwise.  The coloring is well-defined: a $k$-subset cannot host both a clique of size $s$ and an independent set of size $t$ as they would intersect at two (or more) vertices.  Obviously, a size-$s$ clique is contained in blue $k$-subsets only, and a size-$t$ independent set is contained in red $k$-subsets only.  It thus follows from the assumption on $G$ that every set of $n$ vertices contains an $s$-subset all of whose containing $k$-subsets are blue and a $t$-subset all of whose containing $k$-subsets are red. Hence $f_k(n,s,t)\ge N+1=g(n,s,t)$, completing our proof.
\end{proof}

\cref{thm:ramsey} clearly follows \cref{lem:connection} and \cref{prop:ramsey}. For reader's convenience we restate \cref{prop:ramsey} here.

\ramsey*

The rest of this section is devoted to the proof of \cref{prop:ramsey}. Naturally, the proof is broken into two lemmas corresponding to the upper and lower bounds of $g(n,s,t)$.

\begin{lem}\label{lem:ramsey-UpperBound}
Provided $n$ is sufficiently large and $2\le s\le t \le n/\log n$, we have 
\[
g(n,s,t) \le 2^{\frac{4n}{t}\log (\frac{2et}{s})}.
\]
\end{lem}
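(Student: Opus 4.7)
The plan is to construct the desired $n$-vertex subset $V\subseteq V(G)$ iteratively, in roughly $n/(t-1)$ rounds: each round appends a $(t-1)$-element block to $V$ and shrinks the ambient search set by a controlled multiplicative factor, so that over all rounds the total shrinkage is $(2et/s)^{4n/t}$, matching the claimed bound on $N$.

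As a preliminary, the usual vertex-selection trick yields the Erd\H{o}s--Szekeres-style recursion
\[
g(n,s,t)\le g(n-1,s-1,t)+g(n-1,s,t-1),
\]
by picking any $v\in V(G)$, recursing on whichever of $N(v)$ or $V(G)\setminus (N(v)\cup\{v\})$ is larger, and appending $v$ to the subset returned by the recursion (noting that appending $v$ in the $N(v)$ branch increases $\omega$ by at most $1$ and leaves $\alpha$ unchanged on that side). Unfortunately, unrolling this recursion alone gives only a bound of the form $\binom{s+t-4}{s-2}\cdot 4^{n}$, which is far too weak.

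To sharpen, I would commit in advance to one of the two target properties for $V$ via a global density dichotomy on $G$. Say we aim for $\alpha(G[V])\le t-1$. Build $V=I_1\cup\cdots\cup I_k$ where each $I_i$ is a $(t-1)$-element independent set of $G$ and $I_{i+1}$ is contained in the common $G$-neighborhood of $I_1\cup\cdots\cup I_i$. Then every cross-block pair of vertices forms an edge, so any independent set in $G[V]$ lies in a single block and has size at most $t-1$, yielding $\alpha(G[V])\le t-1$. A symmetric construction using $(s-1)$-cliques inside common non-neighborhoods handles the dual case $\omega(G[V])\le s-1$. (If at some intermediate stage the ambient graph already satisfies $\alpha\le t-2$, we simply fill the remaining slots of $V$ with any vertices from that ambient set, and the cross-block argument still gives $\alpha(G[V])\le t-1$.)

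The heart of the argument is then a one-step extraction lemma: in any graph on $M$ vertices of the appropriate density, one can find a $(t-1)$-independent set whose common $G$-neighborhood has size at least $M/(2et/s)^{4(t-1)/t}$. Iterating this $n/(t-1)$ times gives exactly the claimed total shrinkage. I would prove this lemma by double-counting pairs (independent set, common neighbor), invoking the Ramsey bound $R(s,t)\le(2et/s)^{O(s)}$ as a subroutine to guarantee a sufficient supply of $(t-1)$-independent sets inside denser substructures. The main obstacle is executing this extraction with the tight multiplicative factor $(2et/s)^{4(t-1)/t}$: this requires a Kruskal--Katona-style balance between the number of $(t-1)$-independent sets and the typical size of their common neighborhoods, together with a verification that the target property chosen at the outset remains the correct choice throughout all rounds of the iteration.
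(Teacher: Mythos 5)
There is a genuine gap: the witness structure you are trying to build is far stronger than what the lemma requires, and it simply does not exist in graphs of the claimed size. In your case $\alpha(G[V])\le t-1$ you need $V$ to induce a ``complete multipartite'' pattern with $n/(t-1)$ blocks of size $t-1$ and \emph{all} cross-block pairs present as edges; that is $(1-o(1))\binom{n}{2}$ prescribed edges. Test this on $G(N,\tfrac12)$ with $N=2^{\frac{4n}{t}\log(2et/s)}$: the expected number of vertex $n$-sets carrying such a pattern is at most $N^{n}2^{-(1-o(1))\binom{n}{2}}=2^{\frac{4n^2}{t}\log(2et/s)-(1-o(1))n^2/2}$, which tends to $0$ once $\log(2et/s)<t/8$, i.e.\ for all large $t$ in the permitted range $t\le \log n/(120\log\log n)$. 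The dual structure ($(s-1)$-cliques with all cross pairs non-edges) dies by the same computation, and no ``density dichotomy'' can rescue this, since $G(N,\tfrac12)$ is a single graph on which your method must succeed but for which neither target exists. Concretely, this is why your ``one-step extraction lemma'' cannot hold at the rate $(2et/s)^{4(t-1)/t}$ per round: the common neighbourhood of $t-1$ vertices in a medium-density graph shrinks by a factor exponential in $t-1$, not by a bounded power of $2et/s$, so iterating costs $2^{\Omega(n)}$ rather than $2^{O(n/t)}$. (There is also an internal tension you cannot dichotomize away: each of your two cases simultaneously wants sparseness for its blocks and density for its cross structure, or vice versa.)

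The fix is to drop the cross-structure requirement entirely. Observe that the union of $t-1$ \emph{pairwise disjoint cliques}, each of size $3n/t$, automatically has independence number at most $t-1$ (an independent set meets each clique at most once), with no condition on edges between the cliques; dually, the union of $s-1$ disjoint independent sets of size $3n/s$ has clique number at most $s-1$. This is what the paper does: with $N=2\cdot 2^{\frac{3n}{t}\log(2et/s)}$, the Erd\H{o}s--Szekeres bound $R(a,b)\le\binom{a+b}{a}$ gives a clique of size $a=3n/t$ or an independent set of size $b=3n/s$ in any graph on $N/2$ vertices; one extracts such a set, deletes it, and repeats $s+t-3$ times (the total number of deleted vertices stays below $N/2$), after which the pigeonhole principle yields $t-1$ cliques of size $a$ or $s-1$ independent sets of size $b$, and either union has at least $1.5n$ vertices. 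Your preliminary recursion $g(n,s,t)\le g(n-1,s-1,t)+g(n-1,s,t-1)$ is the right instinct but, as you note, too lossy; the gain comes from extracting whole $\Theta(n/t)$-sized cliques (or $\Theta(n/s)$-sized independent sets) per step rather than building block-by-block with adjacency constraints.
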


\begin{lem}\label{lem:ramsey-LowerBound}
Suppose $n$ is sufficiently large and $2\le s \le t \le \log n/(120\log \log n)$. Then
\[
g(n,s,t) \ge 2^{\frac{n}{32t}\log(\frac{2et}{s})}.
\] 
\end{lem}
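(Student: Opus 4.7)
The plan is to use the probabilistic method. I would set $N = \lceil 2^{n\log(2et/s)/(32t)}\rceil$ and consider the Erd\H{o}s--R\'enyi random graph $G = G(N, p)$ for an edge probability $p \in (0, 1/2]$ to be chosen. For each $n$-subset $V$ of $V(G)$, let $A_V$ and $B_V$ denote the events that $G[V]$ is $K_s$-free and that $G[V]$ has no independent set of size $t$, respectively. Since all $V$ are exchangeable, it suffices to show $\binom{N}{n}(\Pr[A_V]+\Pr[B_V]) < 1$ for a fixed $V$; then with positive probability $G$ witnesses $g(n,s,t) > N-1$, giving the desired bound.

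The key tool is Janson's inequality, which gives $\Pr[A_V] \le \exp(-\mu_s + \Delta_s/2)$ with $\mu_s = \binom{n}{s}p^{\binom{s}{2}}$ and $\Delta_s = \sum_{\{S,S'\}:\,|S\cap S'|\ge 2}p^{2\binom{s}{2}-\binom{|S\cap S'|}{2}}$, together with the analogous bound $\Pr[B_V]\le \exp(-\mu_t+\Delta_t/2)$ obtained by passing to the complement (so $p$ is replaced by $1-p$ and $s$ by $t$, yielding $\mu_t = \binom{n}{t}(1-p)^{\binom{t}{2}}$). I would then choose $p$ so that $p^{\binom{s}{2}}$ is of order $s!\log(2et/s)/(tn^{s-2})$, making $\mu_s$ a large constant multiple of $n\log N$; concretely this gives $p \asymp \log(et)/t$ when $s=2$, and $p$ equal to a small negative power of $n$ when $s\ge 3$. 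Since the resulting $p$ is small, $(1-p)^{\binom{t}{2}} \ge n^{-o(1)}$, so $\mu_t$ comfortably exceeds $n\log N$ as well.

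The technical heart of the argument is to verify Janson's condition $\Delta \le \mu$ for both events. For the $K_s$ event one bounds
\[
\Delta_s/\mu_s = \sum_{j=2}^{s-1}\binom{s}{j}\binom{n-s}{s-j}/\binom{n}{s}\cdot p^{-\binom{j}{2}},
\]
exploiting the $n^{-j}$ decay of the hypergeometric weights against the chosen $p$. For the $B_V$ analogue there are additional blowups of the form $(1-p)^{-\binom{j}{2}}$ and $t^{O(t)}$; the hypothesis $t \le \log n/(120\log\log n)$ enters precisely to guarantee $t^{O(t)} \le n^{o(1)}$, which is enough to keep $\Delta_t/\mu_t < 1$. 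Granting both bounds, Janson gives $\Pr[A_V], \Pr[B_V]\le \exp(-4n\log N)$, and the union bound $\binom{N}{n}\le 2^{n\log N}$ completes the argument. The main obstacle is this simultaneous control: a single $p$ must handle both ``no $K_s$'' and ``no $\overline{K_t}$'' uniformly over $2\le s\le t$, with the $\Delta_t$ side being particularly delicate when $p$ is only polylogarithmically small (the $s=2$ regime), as there $(1-p)^{-\binom{j}{2}}$ can grow non-trivially.
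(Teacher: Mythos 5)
Your overall skeleton --- take $G(N,p)$ with $N=2^{\frac{n}{32t}\log(2et/s)}$ and run a first-moment bound over the events $A_V$, $B_V$ for $n$-sets $V$ --- is exactly the paper's. The gap is in the key step: Janson's inequality cannot deliver probability bounds of the required strength $2^{-(1+\Omega(1))n\log N}=2^{-\Theta(n^2\log(2et/s)/t)}$ for either event. For $B_V$, the pair of $t$-sets sharing exactly two vertices already contributes $\Delta_t\ge \mu_t\binom{t}{2}\binom{n-t}{t-2}(1-p)^{\binom{t}{2}-1}$, whence $\mu_t^2/\Delta_t\le \binom{n}{t}(1-p)\big/\big(\tbinom{t}{2}\tbinom{n-t}{t-2}\big)=O\!\left(n^2/t^4\right)$ for every choice of $p$. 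So $\Delta_t\gg\mu_t$ (your condition fails for all $t\ge 3$), and even the extended Janson bound $e^{-\mu_t^2/(2\Delta_t)}$ bottoms out at $e^{-O(n^2/t^4)}$, which loses to $\binom{N}{n}\ge 2^{(1-o(1))n\log N}$ as soon as $t$ exceeds an absolute constant. A huge first moment $\mu_t$ does not help when the copies overlap this heavily.

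The clique side is worse for $s\ge 3$: your choice forces $p=n^{-\Omega(1)}$, and then no inequality can work because the statement being proved is false for that $p$. Indeed $\Prob[A_V]\ge(1-p)^{\binom{n}{2}}\ge e^{-2pn^2}=e^{-O(n^{2-c})}$, while $\binom{N}{n}^{-1}=2^{-\Theta(n^2\log(2et/s)/t)}$ with $t\le\log n$, so $\binom{N}{n}\Prob[A_V]\to\infty$. To beat the trivial lower bound $\Prob[K_s\text{-free}]\ge(1-p)^{(1-\frac{1}{s-1})\binom{n}{2}}$ coming from complete multipartite graphs, one is forced to take $p\gtrsim \frac{s}{t}\log(\frac{2et}{s})$ (the paper's choice), and at that density $\Delta_s/\mu_s\to\infty$ as well. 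The correct rates, $\Prob[A_V]\le e^{-\Omega(pn^2/s)}$ and $\Prob[B_V]\le 2^{n^{2-1/(8t)}}p^{\Omega(n^2/t)}$, which match the extremal constructions, are what the Balogh--Samotij container lemma (plus supersaturation) supplies in the paper; some container- or entropy-type input seems unavoidable here, and Janson is not a substitute for it.
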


We will prove \cref{lem:ramsey-UpperBound} using the Erd\H{o}s-Szekeres bound for the off-diagonal Ramsey numbers.

\begin{proof}[Proof of \cref{lem:ramsey-UpperBound}]
The off-diagonal Ramsey number $R(a,b)$ is the smallest natural number $N$ such that any $N$-vertex graph $G$ contains either a clique of size $a$ or an independent set of size $b$. The Erd\H{o}s-Szekeres bound \cite{ES35} says 
$R(a,b) \le \binom{a+b}{a}$ for any $a,b\ge 1$.
In the case $1 \le a \le b$, this implies $R(a,b) \le \binom{2b}{a}< 2^{a\log(\frac{2eb}{a})}$. Thus, for any integer $m\ge 1$ we have
\[
R\Big(\frac{\log m}{\log(\frac{2et}{s})},\frac{t\log m}{s\log(\frac{2et}{s})}\Big) < 
2^{\frac{\log m}{\log(\frac{2et}{s})}\cdot \log(\frac{2et}{s})}=2^{\log m}=m.
\]
We conclude that in any $m$-vertex graph one can find either a clique of size $\frac{\log m}{\log(\frac{2et}{s})}$ or an independent set of size $\frac{t\log m}{s\log(\frac{2et}{s})}$.

Let $N:=2\cdot 2^{\frac{3n}{t}\log(\frac{2et}{s})}$, $a:=\frac{\log(N/2)}{\log(\frac{2et}{s})}=\frac{3n}{t}$, and $b:=\frac{t\log(N/2)}{s\log(\frac{2et}{s})}=\frac{3n}{s}$. Since $2\le s\le t \le n/\log n$, we have $N\ge 2^{\frac{3n}{t}} \ge 2^{3\log n}=n^3$. Moreover, as $2\le s\le t$, 
we see that $a \le b \le \frac{t\log N}{4}$. Consider any graph $G$ on $N$ vertices. From the above discussion, we know that $G$ must contain either a clique of size $a$ or an independent set of size $b$.
By removing this set and repeating $s+t-3$ times, we obtain either $t-1$ vertex disjoint cliques of the same size $a$ or $s-1$ vertex disjoint independent sets of the same size $b$, because the number of removed vertices is always at most 
\[
(s+t-3)\cdot \max\{a,b\} \le 2t \cdot \frac{t\log N}{4} \le N/2
\]
for $t\le n/\log n$ and $N\ge n^3$.
In the latter case, the union of these sets has at least $(s-1)b=3(s-1)n/s \ge 1.5n$ vertices and does not contain a clique of size $s$.
In the former, the union of those sets has cardinality at least $(t-1)a=3(t-1)n/t \ge 1.5n$ and does not contain an independent set of size $t$. Therefore, $g(n,s,t)\le N=2\cdot 2^{\frac{3n}{t}\log(\frac{2et}{s})}\le  2^{\frac{4n}{t}\log (\frac{2et}{s})}$.
\end{proof}

The proof of \cref{lem:ramsey-LowerBound} uses a container theorem of Balogh and Samotij \cite[Proposition 6.1]{BS19}.

\begin{lem}[Balogh and Samotij]
\label{lem:containers}
	For all sufficiently large $n$ and all $r$ with $3\le r \le \log n/(120 \log \log n)$, there exists a collection $\calC$ of at most $2^{n^{2-1/(8r)}}$ subgraphs of $K_n$ such that:
	\begin{itemize}
		\item[\rm (a)] Each $K_r$-free subgraph of $K_n$ is contained in some member of $\calC$,
		\item[\rm (b)] Each $G\in\calC$ has fewer than $\left(1-\frac{1}{2r}\right)\binom{n}{2}$ edges.
	\end{itemize}
\end{lem}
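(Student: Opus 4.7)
The plan is to apply the graph container method of Balogh--Morris--Samotij and Saxton--Thomason, in the specialized form for $K_r$-free graphs used by Balogh and Samotij. I would construct an algorithm that maps each $K_r$-free graph $G\subseteq K_n$ to a pair $(S(G),C(G))$, where $S(G)\subseteq E(G)$ is a small \emph{fingerprint} of size at most $n^{2-1/(8r)}$ and $C(G)\supseteq G$ is a \emph{container} whose edge set depends only on $S(G)$ and whose size is less than $(1-1/(2r))\binom{n}{2}$. Setting $\calC := \{C(G) : G \text{ is } K_r\text{-free}\}$ would then satisfy both (a) and (b), and the bound $|\calC|\le 2^{n^{2-1/(8r)}}$ follows because $|\calC|$ is at most the number of possible fingerprints, namely $\binom{\binom{n}{2}}{\le n^{2-1/(8r)}}\le 2^{n^{2-1/(8r)}}$.

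The engine of the algorithm is the following supersaturation statement for $K_r$: any graph $H\subseteq K_n$ with $|E(H)|\ge (1-1/(2r))\binom{n}{2}$ admits a \emph{heavy-edge set} $H^{\ast}\subseteq E(H)$ of size $\Omega_r(n^2)$ such that each $e\in H^{\ast}$ lies in $\Omega_r(n^{r-2})$ copies of $K_r$ in $H$. This follows from the Kruskal--Katona theorem (or a direct supersaturation argument) combined with a trimming step that discards low-degree edges. The algorithm then iterates: starting from $C_0=K_n$ and $S_0=\emptyset$, at step $i$ with $|E(C_i)|\ge (1-1/(2r))\binom{n}{2}$, it computes $H_i^{\ast}$ from $C_i$, picks the lexicographically first edge $e_i\in H_i^{\ast}$, and either records $e_i$ in the fingerprint (if $e_i\in E(G)$) or deletes it from $C_i$ (if $e_i\notin E(G)$). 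Crucially, $C_{i+1}$ is reconstructible from $S_{i+1}$ alone, since replaying the algorithm from $S_{i+1}$ identifies each deleted edge as the canonical heavy edge that was not added to the fingerprint.

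The main calculation bounds the number of iterations by $n^{2-1/(8r)}$. Using the number of $K_r$-copies in $C_i$ as a potential function (initially at most $\binom{n}{r}$), each step destroys at least the $\Omega_r(n^{r-2})$ copies of $K_r$ passing through $e_i$; combined with $|H_i^{\ast}|=\Omega_r(n^2)$, this yields a multiplicative drop of roughly $1-\Omega_r(n^{-(2-1/(4r))})$ per step, so after $n^{2-1/(8r)}$ iterations the potential falls below $1$ and $C_i$ becomes $K_r$-free, at which point Tur\'an's theorem yields the edge bound in (b). The main obstacle is calibrating the supersaturation constants against the target exponent $1/(8r)$: the gap between $(1-1/(2r))$ and the Tur\'an density $(1-1/(r-1))$ provides the slack for supersaturation to bite, and the hypotheses $r\ge 3$ and $r\le \log n/(120\log\log n)$ are exactly what keeps the $r$-dependent losses absorbable in the exponent.
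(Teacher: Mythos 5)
There is a genuine gap, and it is worth noting first that the paper itself does not prove this lemma: it is quoted from \cite{BS19} (Proposition 6.1), and the remark following the statement explains that property (b) is obtained from the weaker property (b') via the supersaturation theorem of \cite{BBCLMS17}. A blind proof therefore amounts to reproving the Balogh--Samotij \emph{efficient} container lemma, and your sketch does not accomplish this. The concrete failure is in the iteration count. While the container $C_i$ still has at least $\left(1-\frac{1}{2r}\right)\binom{n}{2}$ edges, supersaturation gives it $\Omega_r(n^r)$ copies of $K_r$; deleting the $\Omega_r(n^{r-2})$ copies through one edge is then a multiplicative drop of only $1-O(n^{-2})$, not the claimed $1-\Omega_r\big(n^{-(2-1/(4r))}\big)$, so after $n^{2-1/(8r)}$ steps the potential has shrunk by a factor $\exp\big(-O(n^{-1/(8r)})\big)=1-o(1)$ and is nowhere near $1$. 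Worse, the recording steps --- the only steps that enlarge the fingerprint, and hence the only ones relevant to the bound $|\calC|\le 2^{n^{2-1/(8r)}}$ --- delete nothing from $C_i$, so your potential function does not bound the fingerprint size at all (and with a ``keep the recorded edge'' rule, the lexicographically-first selection would re-select the same edge forever). Bounding the fingerprint is the heart of the container method; it needs a maximum-degree selection rule in the hypergraph of $K_r$-copies together with \emph{balanced} supersaturation, not the first heavy edge in an arbitrary order.

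The second, deeper issue is the dependence on $r$, which your $\Omega_r(\cdot)$ notation conceals. In the stated range $r$ may be as large as $\log n/(120\log\log n)$, so the copy hypergraph has uniformity $\binom{r}{2}$ growing with $n$; the standard container theorems lose factors that are exponential (or worse) in this uniformity, which would obliterate the exponent $n^{2-1/(8r)}$. Obtaining polynomial-in-$r$ losses is exactly the content of the efficient container lemma of \cite{BS19}, so asserting that the hypotheses ``keep the $r$-dependent losses absorbable in the exponent'' assumes precisely what has to be proved. Relatedly, Kruskal--Katona does not yield your heavy-edge supersaturation: many edges alone force no copies of $K_r$ (complete multipartite graphs), so one must use the gap between $1-\frac{1}{2r}$ and the Tur\'an density $1-\frac{1}{r-1}$, and doing so with constants that survive $r\sim\log n/\log\log n$ is exactly the role played by the supersaturation theorem of \cite{BBCLMS17} in the paper's deduction of (b) from (b').
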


{\bf Remark.} In \cite{BS19}, the above statement is proved with property (b) replaced by
\begin{itemize}
\item[\rm (b')] Each $G\in \calC$ either has less than $n^2/8$ edges or it contains a subgraph $G'$ with ${e(G')>e(G)-o(n^2)}$ that has fewer than $n^{r-1/2}$ copies of $K_r$.
\end{itemize}
A supersaturation result of Balogh, Bushaw, Collares, Liu, Morris and Sharifzadeh \cite[Theorem 1.2]{BBCLMS17} shows property (b) follows from property (b'). 

We are now in a position to justify \cref{lem:ramsey-LowerBound}.

\begin{proof}[Proof of \cref{lem:ramsey-LowerBound}]
Let $N=2^{\frac{n}{32t}\log (\frac{2et}{s})}$, and $p=\frac{s}{2et}\log(\frac{2et}{s})$. 
According to \cref{lem:containers}, there exists a collection $\calC$ of at most $2^{n^{2-1/(8s)}}$ subgraphs of $K_n$ satisfying:
	\begin{itemize}
		\item[(a)] Each $K_s$-free subgraph of $K_n$ is contained in some member of $\calC$,
		\item[(b)] Each $G\in\calC$ has fewer than $\left(1-\frac{1}{2s}\right)\binom{n}{2}$ edges.
	\end{itemize}
Let $\calG(n,p)$ denote the Erd\H{o}s-R\'enyi random graph with $n$ vertices and edge density $p$. It follows from properties (a) and (b) that
\[
\Prob(\calG(n,p) \enskip \text{is $K_s$-free}) \le \sum_{G\in \calC}(1-p)^{\binom{n}{2}-e(G)}
 \le 2^{n^{2-1/(8s)}} (1-p)^{(1+o(1))n^2/(4s)}.
 \]
By the same argument, we get $\Prob(\calG(n,p) \enskip \text{has no independent sets of size $t$}) \le 2^{n^{2-1/(8t)}} p^{(1+o(1))n^2/(4t)}$. Thus in $\calG(N,p)$ the expected number of vertex sets of size $n$ which does not contain both a clique of size $s$ and an independent set of size $t$ is at most
\begin{align*}
&\binom{N}{n}\cdot 2^{n^{2-1/(8s)}} (1-p)^{(1+o(1))n^2/(4s)}+\binom{N}{n}\cdot 2^{n^{2-1/(8t)}} p^{(1+o(1))n^2/(4t)}\\
& \le \exp_2\Big(n\log N+n^{2-\frac{1}{8s}}-(1+o(1))\frac{pn^2}{4s}\Big) + \exp_2\Big(n\log N+n^{2-\frac{1}{8t}}+(1+o(1))\log(p)\frac{n^2}{4t}\Big)\\
&= \exp_2\Big(\frac{epn^2}{16s}+n^{2-\frac{1}{8s}}-(1+o(1))\frac{pn^2}{4s}\Big) + \exp_2\Big(\log\left(\frac{2et}{s}\right)\frac{n^2}{32t}+n^{2-\frac{1}{8t}}+(1+o(1))\log(p)\frac{n^2}{4t}\Big)\\
&= o(1),
\end{align*}
where in the second line we used the inequalities $\binom{N}{n}\le N^n$ and $1-p \le 2^{-p}$, in the third line we substituted $\log N=\frac{n}{32t}\log(\frac{2et}{s})$ and $p=\frac{s}{2et}\log(\frac{2et}{s})$,
and in the last passage we used the estimate $\log p \le - \frac{1}{4}\log\left(\frac{2et}{s}\right)$.
This completes our proof.
\end{proof}

\section{Proofs of \cref{thm:saturated fixed k or r,thm:saturated general}}\label{sec:saturated}

We will use the following observation several times, often without referring to it explicitly.

\begin{obs}
Let $k\ge 3$ and $s\ge r\ge 2$.	Suppose that $G_1,\ldots,G_s$ are edge-disjoint subgraphs of the complete graph on $V$ such that for every $i\in [s]$ and for every $U\in \binom{V}{|V|/r}$ the graph $G_i[U]$ contains a copy of $K_{k-1}$. Then 
\[
\ssat_r(K_k) \le |V|.
\]
\end{obs}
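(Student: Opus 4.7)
The plan is to construct an explicit $r$-edge-coloring $\chi$ of $K_V$ from the $G_i$'s and verify that it is $(r,K_k)$-semisaturated; the desired inequality then follows directly from the definition of $\ssat_r(K_k)$.

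First I would build $\chi$ as follows: for each $i\in [r-1]$, color all edges of $G_i$ with color $i$, and then assign color $r$ to every remaining edge of $K_V$ (these include all edges of $G_r,\ldots,G_s$ together with any edges not covered by any $G_i$). Since the $G_1,\ldots,G_s$ are pairwise edge-disjoint, this is a well-defined coloring, and by construction the color-$i$ class contains $G_i$ for every $i\in[r]$.

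Next I would check semisaturation. Let $K_V \cup \{v\}$ be any one-vertex extension equipped with some $r$-coloring of the new edges. By pigeonhole, at least $\lceil |V|/r \rceil$ of the edges at $v$ receive a common color $c\in [r]$; pick $U\subseteq V$ of size exactly $|V|/r$ within the corresponding neighborhood. The color-$c$ subgraph of $\chi$ restricted to $U$ is a supergraph of $G_c[U]$, which by hypothesis contains a copy of $K_{k-1}$. Combining this $K_{k-1}$ with $v$ produces a monochromatic $K_k$ in color $c$ using the new vertex, hence a new monochromatic $K_k$ in the extension. This shows $\chi$ is $(r,K_k)$-semisaturated, so $\ssat_r(K_k)\le |V(K_V)|=|V|$.

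There is no real obstacle here: the edge-disjointness hypothesis is exactly what lets us realize the $G_i$'s simultaneously as (subsets of) the color classes, the condition $s\ge r$ ensures there are enough of them to seed all $r$ colors, and the $K_{k-1}$-in-every-large-induced-subgraph hypothesis is tailored precisely to the pigeonhole step above. The only minor point worth flagging is the convention that $|V|/r$ be interpreted as an integer via $\lceil\cdot\rceil$ on the pigeonhole side and $\lfloor\cdot\rfloor$ on the hypothesis side, but passing to a subset preserves containment of $K_{k-1}$, so nothing is lost.
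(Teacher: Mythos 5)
Your proof is correct and is exactly the argument the paper has in mind (the Observation is stated there without proof): seed the $r$ color classes with $G_1,\ldots,G_r$, and for any added vertex use pigeonhole to find a color-$c$ neighborhood of size at least $|V|/r$, inside which $G_c$ supplies a $K_{k-1}$ that extends to a new monochromatic $K_k$ through the new vertex. Your handling of the leftover edges and of the rounding of $|V|/r$ is fine and consistent with the paper's stated convention of omitting floors and ceilings.
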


We prove Part (i) of \cref{thm:saturated fixed k or r} via an explicit construction.	

\fixed*

\begin{proof}[Proof of \cref{thm:saturated fixed k or r} (i)]
From \cref{thm:saturated} (i), we learn that $\ssat_r(K_k)=\Omega_r(k^2)$.
To complete the proof, it remains to show that $\ssat_r(K_k)=O_r(k^2)$, a task we now begin.

Without loss of generality we may assume that $k\ge 6r.$
By Chebyshev's Theorem there exists a prime $q$  with $3rk \le q \le 6rk$.
Let $\F_q^2$ be the affine plane over the $q$-element field $\F_q$, with point
set $\cP$ and line set $\cL$. The common vertex set of our graphs $G_1,\ldots,G_r$ is $V:=\cP$. Note that $n:=|V|=q^2 \le 36r^2k^2$. We partition $\cL$ arbitrarily into $r$ families $\cL_1,\ldots,\cL_r$, each consists of $\frac{q^2+q}{r}$ lines. As the edges of $G_i$ we take exactly those pairs $u,v \in \cP$ which lie in a line from $\cL_i$.

Since there is exactly one line passing through two given points, the graphs $G_i$ form an $r$-edge-coloring of the complete graph on $V$. To finish the proof, it suffices to show that for every $i\in [r]$ and every subset $U\in \binom{V}{|V|/r}$ the graph $G_i[U]$ contains a copy of $K_k$.

For a line $\ell \in \cL_i$, let $p_{\ell}$ be the number of points $p\in U$ such that $p\in \ell$. It follows from a point-line incidence bound (see e.g. \cite{Vinh11}) that
\begin{align*}
\sum_{\ell \in \cL_i}p_{\ell} &\ge \frac{|U||\cL_i|}{q}-2q^{1/2}\sqrt{|U||\cL_i|}\\
&\ge \frac{q^3}{r^2}-2q^{1/2}\sqrt{\frac{q^4}{r^2}}\\
&\ge \frac{q^3}{2r^2},
\end{align*} 
where in the last passage we used the assumptions that $q\ge 3kr$ and  $k\ge 6r$. By the pigeonhole principle, there must be a line $\ell \in \cL_i$ with $p_{\ell} \ge \frac{q^3}{2r^2|\cL_i|} \ge k$. Thus $G_i[U]$ contains a copy of $K_k$ within $\ell$. This completes our proof.
\end{proof}

We next proceed to the proof of Part (ii) of \cref{thm:saturated fixed k or r}. 

\begin{proof}[Proof of \cref{thm:saturated fixed k or r} (ii)]
The upper bound follows swiftly from a result of Fox, Grinshpun, Libenau, Person and Szab\'o \cite{FGLPS16}, which in turn is based heavily on the work of Dudek, Retter and R\"{o}dl \cite{DRR14}. Following \cite{FGLPS16}, we call a sequence of pairwise edge-disjoint graphs $G_1,\dots, G_r$ on the same vertex set $V$ a {\em color pattern} on $V$; we assign color $i$ to all the edges of $G_i$. A color pattern $G_1,\ldots, G_r$ is called $K_k$-{\em free} if none of the $G_i$ contains $K_k$ as a subgraph.  A graph with colored vertices and edges is called {\em strongly monochromatic} if all its vertices and edges have the same color.  An $r$-coloring is a function $\chi\colon V\rightarrow [r]$.

Define $P_r(k-1)$ to be the least integer $n$ such that there exists a $K_k$-free color pattern $G_1,\ldots, G_r$ on an $n$-element vertex set $V$ with the property that any $r$-coloring of $V$ contains a strongly monochromatic $K_{k-1}$.

Inspecting the definitions of $\ssat_r(K_k)$ and $P_r(k-1)$, we see that $\ssat_r(K_k) \le P_r(k-1)$. Moreover, it follows from \cite[Lemma 4.3]{FGLPS16} that $P_r(k-1) \le C(\log r)^{8(k-1)^2}r^2$, where $C=C(k)>0$ is a constant. It is worth pointing out that
\cite[Theorem 1]{GW20} implies $P_r(2)=\Theta(r^2\log r)$.
Therefore, we get the desired bound
\[
\ssat_r(K_k) \le C(\log r)^{8(k-1)^2}r^2.
\]

For the lower bound, we first establish the following recursion

\begin{equation}\label{eq:recursion}
\ssat_r(K_k) \ge \ssat_{r-1}(K_k)+r/2 \quad \forall r \ge 2,k \ge 3.
\end{equation}
Clearly, \eqref{eq:recursion} forces $\ssat_r(K_k) \ge \frac12 \sum_{i=2}^{r} i \ge \frac14 r^2$.

Our remaining task is to justify \eqref{eq:recursion}. Take a smallest $(r,K_k)$-semisaturated graph $G\in \calG_r$. Then $G$ has $n=\ssat_r(K_k)$ vertices. \cref{thm:saturated} (i) implies 
$n\ge (r-1)k^2-(3r-4)k+(2r-3) \ge r$.

For $i\in [r]$, denote by $G_i$ the $i$-th color class of $G$. Some color class, say $G_1$, has at most $\frac{1}{r}\binom{n}{2}$ edges. By Tur\'an's theorem applied to the complement of $G_1$, $G_1$ contains an independent set $I$ of cardinality $|I|\ge \frac{n^2}{n+2e(G_1)} \ge \frac{n^2}{n+n(n-1)/r} \ge r/2$, as $n\ge r$. Since $I$ does not contain $K_{k-1}$ at all, the restriction of $G$ to the vertex set $V(G)\setminus I$ is $(r-1,K_k)$-semisaturated. It follows that
\[
\ssat_r(K_k)=|V(G)\setminus I| + |I| \ge \ssat_{r-1}(K_k)+r/2,
\]
completing the proof.
\end{proof}

We close this section with a proof of \cref{thm:saturated general}. Once again it is done via an explicit construction.

\general*

\begin{proof}
We learn from Chebyshev's Theorem that the interval $[kr, 2kr]$ contains a prime number $q$. 
Let $\F_q$ be the finite field of order $q$. 
The vertex set of our graph will be $V := \F_q^3$. By the choice of $q$, $n:=|V|=q^3 \le 8k^3r^3$. 
For each $\lambda \in \F_q$, we will define an incidence structure 
$\I_{\lambda} = (V,\cL_{\lambda} )$ where $\cL_{\lambda}$ is a family of lines in $ \F_q^3$. 
For $\lambda \in \F_q$, let
\[ C_{\lambda} := \Big\{ (1,\lambda, \mu) : \mu \in \F_q\Big\}. \]
Clearly, $C_{\lambda_1}\cap C_{\lambda_2}=\emptyset$ if $\lambda_1 \ne \lambda_2$.

A line in $\F_q^3$ is a set of the form $\ell_{\bm{s},\bm{v}} = \{ \beta \bm{s} + \bm{v}  :  \beta \in \F_q \}$ for some $\bm{s}\in \F_q^3\setminus\{(0,0,0)\}$ and $\bm{v}\in \F_q^3$, where $\bm{s}$ is called the {\em slope}. 
We define 
\[ \cL_{\lambda} := \{ \ell_{\bm{s},\bm{v}} : \bm{s} \in C_{\lambda}, \bm{v} \in \F_q^3\}. \]
Since each 
line contains exactly $q$ points, $|\cL_{\lambda}| =|C_{\lambda}| \frac{q^3}{q} = q^3$.
Each structure $\I_{\lambda}$ enjoys the following properties: 
\begin{itemize}
	\item[(P1)] Every point $v \in V$ is contained in $q$ lines from $\cL_{\lambda}$;
	\item[(P2)] Any two points lie in at most one line; 
\end{itemize}
Furthermore, for $\lambda_1 \neq \lambda_2$, we have
\begin{itemize}
	\item[(P3)] $\cL_{\lambda_1} \cap \cL_{\lambda_2} = \emptyset$.
\end{itemize}

Each incidence structure $\I_{\lambda} = (V,\cL_{\lambda})$ gives rise to a graph $G_{\lambda}$ with vertex set $V$, and edge set
\[
E(G_{\lambda})=\{xy\colon x,y \in \ell \enskip \text{for some} \enskip \ell \in \cL_{\lambda}\}.
\]
By (P3), the graphs $G_{\lambda}$ form a partial $q$-edge-coloring of the complete graph on $V$. To establish the theorem, it suffices to prove that for every $\lambda \in \F_q$ and every subset $U\in \binom{V}{|V|/r}$ the graph $G_{\lambda}[U]$ contains a copy of $K_k$. Indeed, according to (P1), each point $v\in U$ is contained in $q$ lines from $\cL_{\lambda}$. By the pigeonhole principle, we thus can find a line $\ell \in \cL_{\lambda}$ such that 
\[
|\ell \cap U| \ge
\frac{q|U|}{|\cL_{\lambda}|} \ge \frac{q\cdot (q^3/r)}{q^3}=\frac{q}{r} \ge k.
\]
It follows that $G_{\lambda}[\ell \cap U]$ contains a copy of $K_k$. This completes the proof.
\end{proof}

\section{Concluding remarks}\label{sec:conclusion}
We have obtained good bounds for two Ramsey-type problems by exploring their connections to other well-studied problems. In particular, we have shown that
\[
\tfrac14 r^2 \le \ssat_r(K_k) \le C_k (\log r)^{8(k-1)^2} r^2,
\]
which is tight up to the logarithmic factor. However, it is not clear whether the lower bound or the upper bound is closer to the truth. This naturally leads us to the following modest questions.

\begin{ques}
Is it true that $\ssat_r(K_k)=\omega(r^2)$ as $r\rightarrow \infty$?
\end{ques}

\begin{ques}
Does there exist a constant $C$ (independent of $k$) such that $\ssat_r(K_k)=O_k\left((\log r)^Cr^2\right)$?
\end{ques}

In \cite{DKMTWZ20}, Dam\'asdi et al. also studied the behavior of a sibling of the function $\ssat_r(K_k)$.  Let $\calG_r$ denote the family of complete graphs whose edges are colored with $r$ colors (numbered by $1,2,\ldots,r$). A member $G$ of $\calG_r$ is called $(r,K_{k})$-{\em saturated} if for every $i\in [r]$, the graph $G$ does not contain a monochromatic $K_k$ of color $i$, but every $G'\in \calG_r$ that extends $G$ must contain a monochromatic $K_{k}$. 
Let
\[
\sat_r(K_k):= \min \{ |V(G)|\colon G \in \calG_r \enskip \text{is $(r,K_k)$-saturated}\}.
\]
From the definitions of $\sat_r(K_k)$ and $\ssat_r(K_k)$, we see that $\ssat_r(K_k) \le \sat_r(K_k)$. Unlike $\ssat_r(K_k)$, our understanding of $\sat_r(K_k)$ is still in its infancy state: the only known upper bound, obtained by Dam\'asdi et al. \cite{DKMTWZ20}, says that $\sat_r(K_k)\le (k-1)^r$. The obvious open questions here are the following.

\begin{ques}
Does there exist an absolute constant $C>0$ such that $\sat_r(K_k)=O_r(k^C)$?
\end{ques}

\begin{ques}\label{ques:triangle}
Is there an absolute constant $C>0$ such that $\sat_r(K_k)=O_k(r^C)$?
\end{ques}

The triangle case of \cref{ques:triangle} is of particular interest to us.

\end{document}